\theoremstyle{definition}
\newtheorem{theorem}{Theorem}[section]
\newtheorem{lemma}[theorem]{lemma}
\newtheorem{corollary}[theorem]{corollary}
\theoremstyle{remark}
\title{Inequality and Nyman-Beurling-Baez-Duarte criteria}
\author{Kwok Kwan Wong}
\date{June 2022}
\begin{document}

\maketitle
\begin{abstract}
    We proposed a proof of the Riemann hypothesis. The proof is based on the Nyman-Beurling-Baez-Duarte condition. By proving existence of the solution for a system of inequalities, we can show that there is a sequence, which act as the coefficient of Beurling's sequence, can approximate the constant vector in a weighted Hilbert space. 
\end{abstract}
\textit{Mathematics Subject Classification: 11Mxx, 46Cxx}\\
\textit{Keywords: Riemann hypothesis, Nyman-Beurling-Baez-Duarte condition, System of inequalities}
\section{Introduction}

The Riemann hypothesis was raised by Riemann in 1859 \cite{riemann}. The hypothesis is about the zeros of the Riemann-Zeta function $\zeta$, $\zeta$ has the trivial zero, which are negative even integers, and the nontrivial zeros. Riemann posed a hypothesis that the real part of the nontrivial zeros are $\frac{1}{2}$, which we call the \textit{Riemann hypothesis}.\\

To prove or disprove the Riemann hypothesis, many scholars try to formulate the Riemann hypothesis in another way \cite{riesz,salem,speiser}. In particular, Nyman and Beurling show that the Riemann hypothesis is true if and only if the space of the Beurling function is dense in Hilbert space $L^{2}((0,1))$ \cite{nyman,beurling}. Baez-Duarte has restated and strengthened this condition to be the Riemann hypothesis is true if and only if the characteristic function $\chi_{(0,1]}$ belongs to the closure of the space of the \textit{natural Beurling function} in the Hilbert space $L^{2}((0,\infty))$ \cite{Luis}. Bagchi reformulates the condition to be if the constant sequence belongs to the closure of the span of the \textit{Beurling sequence} in the $l^{2}(\mathbb{N})$ with a weighted inner product \cite{bagchi}. \\
There are numerous working on this approach\cite{maier,yang,habsieger,bettin,bober,burnol}. Our contribution is that by show that for large enough $n$, we can bound the all component of Beurling sequence with any positive number. Normally to show a vector belongs to a subspace, one required to find the coefficients of the basis of the subspace. Examples is the natural approximation, we overcome this technical difficulty by only showing the coefficients exists without explicitly constructing them.  So we have the following theorem.
\begin{theorem}
    The Riemann hypothesis is true.
\end{theorem}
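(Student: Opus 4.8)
The plan is to invoke the Bagchi reformulation quoted above and reduce the Riemann hypothesis to a single approximation statement: that the constant sequence $\mathbf{c}=(1,1,1,\dots)$ lies in the closed linear span of the Beurling sequences $\{\beta_n\}_{n\ge 1}$ inside the weighted Hilbert space $H=\ell^2(\mathbb{N})$ with inner product $\langle\cdot,\cdot\rangle_w$ and norm $\|\cdot\|_w$. Concretely, it suffices to prove that for every $\varepsilon>0$ there exist an integer $N$ and real coefficients $a_1,\dots,a_N$ with
\[
\Bigl\| \mathbf{c} - \sum_{n=1}^{N} a_n\,\beta_n \Bigr\|_w < \varepsilon .
\]
Once this holds for every $\varepsilon$, the Nyman--Beurling--Baez-Duarte criterion yields the theorem at once, so the whole burden falls on producing, for each $\varepsilon$, a feasible coefficient vector.

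Rather than constructing the $a_n$ explicitly --- the route taken by the natural approximations, which is known to be delicate --- I would recast the condition $\|\mathbf{c}-\sum_n a_n\beta_n\|_w<\varepsilon$ as a feasibility question for a system of linear inequalities in the unknowns $a_1,\dots,a_N$. First I would split the weighted squared norm into a head, over coordinates $k\le K$, and a tail, over $k>K$. The head yields finitely many linear constraints $|c_k-\sum_n a_n\beta_n(k)|<\delta_k$; on the tail I would invoke the stated bound --- that for large $n$ every component of $\beta_n$ can be made smaller than any prescribed positive number --- to argue the tail contribution can be absorbed into $\varepsilon$. The core step is then to prove the resulting finite system is consistent for $K$ and $N$ large, which I would attempt by a separation/duality argument: were it infeasible, a hyperplane would separate $\mathbf{c}$ from $\operatorname{span}\{\beta_n\}$, contradicting a direct computation of the inner products $\langle\mathbf{c},\beta_n\rangle_w$.

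The hard part --- and, I expect, the decisive obstacle --- is precisely the tail estimate together with the passage from feasibility of the finite system to the genuine norm bound. Componentwise smallness of $\beta_n$ for large $n$ does \emph{not} control the tail $\sum_{k>K}$ of the weighted norm once the coefficients $a_n$ are themselves allowed to grow, and it is exactly the interaction between the magnitude of the coefficients and the off-diagonal Gram entries $\langle\beta_m,\beta_n\rangle_w$ --- which encode the arithmetic of $\zeta$ --- to which the criterion is sensitive. Indeed, the infimum of the left-hand side over all $N$ and all coefficients is itself equivalent to the Riemann hypothesis, so any valid argument must at some point inject genuine information about the zeros of $\zeta$; a purely feasibility-theoretic argument that ignores the Gram structure cannot suffice. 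I would therefore place the greatest scrutiny on the step where the tail is ``absorbed,'' and would insist on an explicit uniform bound on $\sum_{n=1}^{N}|a_n|$, or equivalently on the smallest eigenvalue of the truncated Gram matrix, before trusting the conclusion.
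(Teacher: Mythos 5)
Your plan retraces the paper's route --- Bagchi's criterion, a finite system of linear inequalities in the coefficients, existence rather than explicit construction --- but it stops exactly where the paper's argument breaks, and the obstacle you flag is not a side issue to be checked later: it is the entire problem. Feasibility of the head system is essentially free; the paper achieves it (Theorem 2.5) by taking $a=A^{+}c$, for which $Aa=AA^{+}c$ exactly, i.e.\ one can only be shown to hit the orthogonal projection $P_{n}\gamma$ of the target onto the column space of $A$, not $\gamma$ itself. Whether $P_{n}\gamma$ is close to $\gamma$, in the weighted norm and uniformly enough in the coordinates, is where the Gram matrix $\langle\gamma_{k},\gamma_{l}\rangle$ --- hence the arithmetic of $\zeta$ --- must enter, precisely as you say. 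Your separation/duality idea hits the same wall: excluding a separating functional that annihilates every $\gamma_{n}$ but not the constant sequence is verbatim the density statement, i.e.\ the Riemann hypothesis, so a ``direct computation of the inner products $\langle\mathbf{c},\beta_{n}\rangle_{w}$'' cannot produce the contradiction without RH-strength input. Your proposal therefore does not prove the theorem, and by your own account could not without the uniform bound on $\sum_{n}|a_{n}|$ (equivalently, a lower bound on the smallest eigenvalue of the truncated Gram matrix) that neither you nor anyone else supplies; quantitative versions of the criterion tie exactly such bounds to zero-free regions.

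For comparison, the paper attempts the absorption you distrust by an operator-theoretic shortcut: it claims $P_{n}\to I$ strongly on $c_{0}$ and then on $\ell^{\infty}$ (Theorems 2.7 and 2.9), so that $|P_{n}\gamma-\gamma|_{i}$ becomes uniformly small. This fails at the very step you single out for scrutiny. The repeatedly used inequality $\|P_{n}\|\le\|P_{n}\|_{2}=1$ (sup-norm operator norm dominated by the spectral norm) is false: an orthogonal projection in $\mathbb{R}^{m}$ can have sup-norm operator norm of order $\sqrt{m}$ (project onto the span of a vector with one large and many small entries), so the uniform-boundedness hypotheses of Theorems 2.6 and 2.8 are unverified and the strong-convergence conclusions collapse; moreover $\gamma=(1,1,\dots)\notin c_{0}$, so even the $c_{0}$ statement would not apply to the one vector that matters, and the passage to $\ell^{\infty}$ via the closure of finite-rank operators never reaches the identity. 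In short, you and the paper take the same approach; the paper ``closes'' the gap with an invalid norm comparison, while you correctly identify that the coefficient--Gram interaction is the missing ingredient --- which means your proposal, honestly read, is a diagnosis of why this argument cannot work as it stands, not a proof of the statement.
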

The details of the approach and the proof of this theorem are discuss in the next session.

\section{Our approach to the problem}
The Hilbert space we consider is $l^{2}(\mathbb{N}):=H$ over $\mathbb{C}$ with the norm induced by the inner product.
\begin{center}
    $\langle a,b\rangle=\sum_{n=1}^{\infty}\frac{a^{*}_{n}b_{n}}{n(n+1)}$.
\end{center}
Observe that bounded sequences belong to $H$ as well.\\

We adopt the notion in \cite{bagchi}, we introduce the sequence $\gamma_{l}=(\{\frac{n}{l}\})=(1/l,2/l,...)$ for $l\in\mathbb{N}$, where $\{x\}$ is the fractional part function. It is easy to see that $\gamma_{l}\in H$ for all $l$. Denote the $span(\gamma_{l},l\in\mathbb{N})=B$, we call the $B$ the space of the \textit{Beurling sequences}. Let $\gamma=(1,1,...)$ be the constant sequence, it is easy to see that it belongs to $H$. Denote $||.||_{H}$ the norm of $H$.
In \cite{bagchi}, it states the following theorem.
\begin{theorem}
The Riemann hypothesis is equivalent to $\gamma\in\overline{B}$, and is equivalent to $B$ is dense in $H$.
\end{theorem}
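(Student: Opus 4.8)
The plan is to derive both equivalences from the Baez--Duarte criterion recalled above, by transporting it from $L^{2}((0,\infty))$ to the weighted sequence space $H$ through an explicit isometry. For $a=(a_{n})\in H$ let $Ua$ denote the step function on $(0,1]$ equal to $a_{n}$ on the interval $I_{n}=(\tfrac{1}{n+1},\tfrac{1}{n}]$. Because $|I_{n}|=\tfrac1n-\tfrac1{n+1}=\tfrac{1}{n(n+1)}$, we have $\|Ua\|_{L^{2}}^{2}=\sum_{n}|a_{n}|^{2}|I_{n}|=\|a\|_{H}^{2}$, so $U$ maps $H$ isometrically onto the subspace $S\subset L^{2}((0,1])$ of functions constant on every $I_{n}$. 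The two distinguished vectors correspond exactly as one wants: since the $I_{n}$ tile $(0,1]$ we get $U\gamma=\chi_{(0,1]}$, while $U\gamma_{l}$ is the step function taking the value $\{n/l\}$ on $I_{n}$, which is precisely the discretization of the natural Beurling function $x\mapsto\{1/(lx)\}=\{(1/l)/x\}$ (indeed $1/(lx)\approx n/l$ on $I_{n}$). Thus $U$ carries the discrete data $(\gamma,\gamma_{l})$ onto the discretized continuous data $(\chi_{(0,1]},\{(1/l)/x\})$ of Baez--Duarte.

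Granting this dictionary, I would first establish \emph{RH} $\iff\gamma\in\overline{B}$. Applying $U$, the condition $\gamma\in\overline{B}$ becomes: $\chi_{(0,1]}$ lies in the closure of $\mathrm{span}\{U\gamma_{l}\}$ inside $S$. Baez--Duarte asserts that RH is equivalent to $\chi_{(0,1]}$ lying in the closure of $\mathrm{span}\{\{(1/l)/x\}\}$. So the whole content reduces to a \emph{faithfulness-of-discretization} statement: $\chi_{(0,1]}$ is approximable by combinations of the continuous Beurling functions if and only if it is approximable by combinations of their step images $U\gamma_{l}$. One direction would use the orthogonal projection (conditional expectation) $P_{S}$ onto $S$, which averages a function over each $I_{n}$: since $P_{S}$ is a norm-one map fixing $\chi_{(0,1]}$, restricting a continuous approximation to $(0,1]$ (which only decreases the $L^{2}$ norm) and projecting by $P_{S}$ yields an approximation of $\chi_{(0,1]}$ by the functions $P_{S}\{(1/l)/x\}$, and one checks these differ from $U\gamma_{l}$ only by the gap between the average of $\{1/(lx)\}$ over $I_{n}$ and its endpoint value $\{n/l\}$.

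The genuine obstacle is exactly this last comparison, \textbf{uniformly in the unknown coefficients}. Per function the discretization error $\|P_{S}\{(1/l)/\cdot\}-U\gamma_{l}\|$ is small and tends to $0$, but in an approximation $\sum_{l}c_{l}\{(1/l)/x\}$ the coefficients $c_{l}$ may be large, so term-by-term smallness does not by itself control $\|\sum_{l}c_{l}(P_{S}\{(1/l)/\cdot\}-U\gamma_{l})\|$. Handling this is precisely the delicate point flagged in the introduction: one must either produce an a priori bound on the size of the coefficients in a near-optimal approximation, or bypass coefficients entirely and argue through orthogonal complements, showing that a nonzero $b\in H$ annihilating every $\gamma_{l}$ forces, via the Dirichlet/Mellin transform $s\mapsto\sum_{n}\overline{b_{n}}\,n^{-s}/(n(n+1))$ and the analytic properties of $\zeta$, that $b$ also annihilates $\gamma$. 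I expect this coefficient-control/complement analysis to be the crux of the entire argument.

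Finally, for the equivalence $\gamma\in\overline{B}\iff B$ dense in $H$, the implication $\Leftarrow$ is trivial, since then $\overline{B}=H\ni\gamma$. For $\Rightarrow$ I would transport through $U$ the scaling structure of the Beurling system: the Nyman--Beurling theory shows that the closed span of the continuous Beurling functions contains $\chi_{(0,1]}$ if and only if it is all of $L^{2}$, a dichotomy coming from the dilation structure of the family $\{\{\alpha/x\}\}$, and $U$ converts this into the statement that $\gamma\in\overline{B}$ forces $\overline{B}=H$. Equivalently, in the orthogonal-complement picture, $B^{\perp}=\{0\}$ as soon as $\gamma\perp B^{\perp}$, which is the same condition on the zero-free region of $\zeta$ already isolated above.
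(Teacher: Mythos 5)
Your setup is the right bridge: the isometry $U$ from $H$ onto step functions constant on the intervals $I_{n}=(\tfrac{1}{n+1},\tfrac{1}{n}]$ is exactly the standard correspondence between Bagchi's sequence space and the Baez--Duarte picture, and you correctly locate where the difficulty sits. But as written the proposal has a genuine gap, and you concede it yourself: both equivalences are reduced to the ``faithfulness-of-discretization'' claim, i.e.\ to controlling $\bigl\|\sum_{l}c_{l}\bigl(P_{S}\{(1/l)/\cdot\}-U\gamma_{l}\bigr)\bigr\|$ with no a priori bound on the coefficients $c_{l}$, and you supply no mechanism for that control. Term-by-term the discretization error is indeed $O(1/l)$, but near-optimal Nyman--Beurling approximations are known to require large oscillating coefficients (already the natural approximation uses M\"obius coefficients whose sizes are not summably small), so term-wise smallness genuinely does not close the argument. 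The alternative you mention in passing --- characterizing $B^{\perp}$ via the Dirichlet series $s\mapsto\sum_{n}\overline{b_{n}}\,n^{-s}/(n(n+1))$ and the zeros of $\zeta$ --- is the route that actually works, but it is essentially the entire proof, and you leave it unexecuted. Note that the paper itself proves nothing here: its ``proof'' is a citation to Theorem 1 of Bagchi, whose argument runs through precisely that annihilator/Dirichlet-series analysis rather than through any discretization comparison, so your sketch cannot be credited as an alternative proof; it is an unfinished reduction to the hard step.

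A second, independent soft spot is your direction $\gamma\in\overline{B}\Rightarrow\overline{B}=H$. The dichotomy in the continuous Nyman--Beurling theory exploits the dilation structure of the full family $\{\rho(\theta/x):0<\theta\leq1\}$; Baez--Duarte's family uses only $\theta=1/k$, and its image under $U$ --- functions constant on the $I_{n}$ --- carries no exact dilation action at all (a dilate of a function constant on the intervals $I_{n}$ does not stay in the subspace $S$), so ``transporting the dilation structure through $U$'' is not available. In Bagchi's treatment both equivalences fall out of one and the same complement analysis: a nonzero element of $B^{\perp}$ produces, through its associated analytic function and $\zeta$, simultaneously a violation of RH and a nonzero pairing with $\gamma$, and there is no separate dilation argument to transport. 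In short, your outline identifies the crux correctly but proves neither equivalence; completing it requires carrying out the Mellin/annihilator computation in full, or simply invoking Bagchi (respectively Baez--Duarte), which is all the paper does.
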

\begin{proof}
See the proof of Theorem 1 of \cite{bagchi}.
\end{proof}
The above statement is equivalent to there exists sequence $a_{n,k}$ such that $||\sum_{k=2}^{n}a_{n,k}\gamma_{k}-\gamma||_{H}$ converge to zero when $n$ goes to infinity.\\ 

Let $e_{i}$ be the sequence with 1 in the i-th entry, zero otherwise. Define $R_{i}:\mathbb{N}\rightarrow\mathbb{N}$, by sending $p$ to $p\, mod\, i$. Observe that for any finite $n$, the i-th component of $x_{n}-\gamma$ is periodic with period $L_{n}$, where $L_{n}$ is the least common multiple of numbers less than or equal to $n$. If there exists $a_{n,k}$ such that $\frac{|x_{n}-\gamma|_{i}^{2}}{i(i+1)}$ is smaller than any positive number for all $1\leq i\leq L_{n}-1$ for large enough $n$ then the Riemann hypothesis is true. In the following discussion, we only involve real numbers only, since if there are $a_{n,k}$ which are real sequences fulfill the conditions, $a_{n,k}$ is also a complex sequence as well. \\

Our approach is the following: For any finite $n\geq2$, we set up a system of inequalities $S(\epsilon,n)$:\\
\begin{center}
    $\sum_{k=2}^{n}a_{k}R_{k}(i)-1\leq\epsilon, i=1,...,L_{n}-1$\\
    $-\sum_{k=2}^{n}a_{k}R_{k}(i)+1\leq\epsilon, i=1,...,L_{n}-1.$\\
\end{center}
Where we re-scale $a_{n,k}$, rename it as $a_{k}$(as we have already specify $n$ in the system, we discard $n$ in the lower index) and $\epsilon$ is some positive number smaller than 1. Let $A$ be the coefficient matrix of the first set of inequalities, $a=(a_{2},a_{3},...,a_{n})^{T}$. The idea is to showing the existence of $a$ without explicitly constructing it.\\
We first show that the rank of $A$ is $n-1$.
\begin{theorem}
    The rank of $A$ is $n-1$.
\end{theorem}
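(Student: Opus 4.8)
\section*{Proof proposal for Theorem 2.2}

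The plan is to show that $A$ has full column rank. Writing out the matrix, $A$ is $(L_n-1)\times(n-1)$ with entries $A_{i,k}=R_k(i)=i\bmod k$, the rows indexed by $i=1,\dots,L_n-1$ and the columns indexed by $k=2,\dots,n$. Since $A$ has exactly $n-1$ columns, it suffices to prove that its columns $v_2,\dots,v_n$ are linearly independent; this forces $\operatorname{rank}(A)=n-1$. So I would suppose a relation $\sum_{k=2}^{n}c_k v_k=0$, equivalently $f(i):=\sum_{k=2}^{n}c_k\,(i\bmod k)=0$ for every $i=1,\dots,L_n-1$, and aim to deduce $c_k=0$ for all $k$.

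Two observations drive the argument. First, evaluating at $i=1$ gives $\sum_{k=2}^{n}c_k=0$, because $1\bmod k=1$ for every $k\ge 2$. Second, I would take the first difference of the periodic map $i\mapsto i\bmod k$: it increases by $1$ when $k\nmid i$ and drops by $k-1$ when $k\mid i$, so $(i\bmod k)-((i-1)\bmod k)=1-k\,\mathbb{1}_{k\mid i}$. Summing against the $c_k$ and using $\sum_k c_k=0$ yields, for every $2\le i\le L_n-1$,
\[
f(i)-f(i-1)=-\sum_{\substack{k\mid i\\ 2\le k\le n}}k\,c_k=0,\qquad\text{hence}\qquad\sum_{\substack{k\mid i\\ 2\le k\le n}}k\,c_k=0 .
\]

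The final step is a divisor induction on $i$. For $n\ge 3$ one has $n\le L_n-1$, so the displayed identity is available for each $i=2,\dots,n$. At $i=2$ it reads $2c_2=0$, giving $c_2=0$; and in general, taking $i=k$, every proper divisor $d\mid k$ with $2\le d<k$ already has $c_d=0$ by the inductive hypothesis, so the identity collapses to $k\,c_k=0$, whence $c_k=0$. Running $k$ from $2$ up to $n$ eliminates every coefficient, proving independence (the case $n=2$ is immediate, as $A$ is the single nonzero column $(R_2(i))_i$). I expect the only point requiring care, rather than a genuine obstacle, to be the index bookkeeping: I must confirm that each $i=k$ invoked in the induction satisfies $i\le L_n-1$, which is precisely the inequality $L_n\ge n+1$ that holds for $n\ge 3$. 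Once the first-difference identity is in place, the divisor structure makes the induction essentially automatic; one could alternatively exhibit an invertible $(n-1)\times(n-1)$ submatrix from the top rows $i=1,\dots,n-1$, but the increment argument has the advantage of treating all $n$ uniformly.
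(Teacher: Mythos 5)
Your proof is correct, and it reaches the conclusion by a genuinely different route than the paper. The paper's proof is a one-step row reduction: since the first row of $A$ is all ones ($1\bmod k=1$ for every $k\ge 2$), replacing the $i$-th row by $i\cdot(\text{first row})$ minus the $i$-th row converts the entry $i\bmod k$ into $i-(i\bmod k)=k\lfloor i/k\rfloor$, which vanishes for $k>i$ and equals $k$ at $k=i$; the rows $i=2,\dots,n$ (the paper's indexing is terse and slightly off, but this is the evident intent) then form a lower triangular $(n-1)\times(n-1)$ submatrix with nonzero diagonal, so $\operatorname{rank}(A)=n-1$. That is essentially the alternative you mention in your closing remark (an explicit invertible submatrix from the top rows). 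Your argument instead differences a putative column relation, using that $(i\bmod k)-((i-1)\bmod k)$ equals $1-k$ when $k\mid i$ and $1$ otherwise, together with $\sum_k c_k=0$ from evaluation at $i=1$, to obtain $\sum_{k\mid i,\,2\le k\le n}kc_k=0$ for $2\le i\le L_n-1$, and then eliminates the coefficients by induction along divisors. The two computations are close cousins — your difference rows are discrete derivatives of the paper's transformed rows, since $k\lfloor i/k\rfloor=\sum_{j\le i,\;k\mid j}k$ — but the paper's version buys an immediate explicit triangular witness in one line of algebra, while yours buys fully checked bookkeeping and structural transparency: you verify the one point the paper glosses over, namely that the rows invoked actually exist, i.e.\ $n\le L_n-1$ (equivalently $L_n\ge n+1$, which holds for $n\ge 3$ since $n(n-1)\mid L_n$), with $n=2$ handled separately, and your divisor induction makes clear exactly where the arithmetic of $i\bmod k$ enters. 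Both proofs are valid.
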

\begin{proof}
    Consider $i\leq n-1$. by multiply first row and minus i-th row of $A$, we obtain a lower triangular sub-matrix in the form $k\lfloor\frac{i}{k}\rfloor$, where $k=2,...n$. so they are linear independent, so the rank of $A$ is $n-1$.
\end{proof}

Now we show a fact in linear algebra.

\begin{lemma}
    Given $A\in\mathbb{R}^{m\times n}$, $m>n$, $x,y\in\mathbb{R}^{n}$. If all the entries of $A$ is non-negative and each column of $A$ has at least one non-zero entries, and $x\geq y$ component-wise, $x\neq y$, then $Ax\geq Ay$ component-wise and $Ax\neq Ay$.
\end{lemma}
\begin{proof}
    We use induction to prove this. Let $k=1$, $A$ be $m$ by $k$ matrix, if $x \geq y$ with $x\neq y$, $Ax=(A_{1}, A_{2},...)^{T}x$, since all $A_{i}$ are non-negative and at least one entries of $A$ is not zero, clearly $A_{i}\geq A_{i}y$. Assume this hold for some positive integer $k$. Consider $A$ a $m$ by $k+1$ matrix, now $x_{i}\geq y_{i}$ for all $i$ with $x\neq y$. Let $A=(A', a)$, where $A'$ is a $m$ by $k$ matrix and $a$ is a $m$ by 1 matrix. All of them have non-negative entries. Let $x=(x',x_{k+1})^{T}$ and $y=(y', y_{k+1})^{T}$ similarly. Now $x\geq y$ and $x\neq y$, if all the cases that $x_{i}>y_{i}$ are in $x', y'$, then by assumption $A'x'\geq A'y'$ with $A'x'\neq A'y'$, and $ax_{k+1}\geq ay_{k+1}$ since $a$ is non-negative. If $x_{k+1}>y_{k+1}$, since $a$ has at least one nonzero entries $a_{i}$ and it is positive, $a_{i}x_{k+1}>a_{i}y_{k+1}$, so $Ax\geq Ay$ and $Ax\neq Ay$. By induction, this holds for all natural number $n$. Thus the conclusion.
    
\end{proof}

\begin{lemma}
    There exists $Av\geq0$ with $v\geq0$.
\end{lemma}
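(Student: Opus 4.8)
The plan is to recognise this as essentially the special case $y=0$ of Lemma 2.3. To apply that lemma I must verify its two hypotheses for $A$. First, every entry of $A$ is non-negative: the $(i,k)$ entry is the coefficient of $a_{k}$ in the $i$-th inequality of the first block of $S(\epsilon,n)$, namely $R_{k}(i)=i\bmod k$, and a remainder modulo $k$ lies in $\{0,1,\dots,k-1\}$, hence is $\geq0$; so $A$ is a non-negative matrix. Second, each column of $A$ carries at least one non-zero entry. Indeed the first row is $(R_{2}(1),\dots,R_{n}(1))=(1,\dots,1)$, because $1\bmod k=1$ for every $k\geq2$, so every column already has a positive entry in the row $i=1$.

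With these two facts in hand, I would take any non-negative $v$ with $v\neq0$ and set $x=v$, $y=0$ in Lemma 2.3. The hypotheses $x\geq y$ component-wise and $x\neq y$ both hold, so the lemma yields $Av=Ax\geq Ay=0$ together with $Av\neq0$. This is exactly the asserted existence of a non-negative $v$ with $Av\geq0$, and in fact it delivers the stronger $Av\neq0$, which is the non-trivial reading one actually wants.

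I do not anticipate a genuine obstacle: the entire argument is the verification of the two elementary properties of $A$ above, after which the conclusion is immediate from Lemma 2.3 with $y=0$. The only point worth recording is that the bare inequality $Av\geq0$ is trivially satisfied by $v=0$ as well — since $A$ is non-negative, the non-negative orthant is mapped into itself — so the role of Lemma 2.3 is precisely to produce a witness with $Av\neq0$. That is the form in which this lemma feeds the subsequent Farkas-type feasibility step for the system $S(\epsilon,n)$, which is why I would phrase the proof around Lemma 2.3 rather than around the vacuous choice $v=0$.
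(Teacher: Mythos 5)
Your proof is correct for the lemma as literally stated, but it takes a genuinely different route from the paper, and the difference is worth recording. The paper's own proof is a one-line direct observation: pick a strictly positive vector $v$; since every entry of $A$ is non-negative and every \emph{row} of $A$ contains a positive entry, $Av$ is strictly positive in \emph{every} component. (The row property holds because row $i$ of $A$ is $(i \bmod 2,\, i \bmod 3,\, \dots,\, i \bmod n)$, which vanishes identically only if every $k\in\{2,\dots,n\}$ divides $i$, i.e. $L_{n}\mid i$ --- impossible for $1\leq i\leq L_{n}-1$.) You instead invoke Lemma 2.3 with $y=0$, whose hypothesis is the \emph{column} property, which you verify correctly via the first row $(1\bmod 2,\dots,1\bmod n)=(1,\dots,1)$. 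Your route is sound and has the virtue of reusing machinery already proved, and you are right that $Av\neq0$ is the only non-vacuous reading of the bare statement (which $v=0$ satisfies trivially). But note that your conclusion $Av\geq0$ with $Av\neq0$ guarantees only \emph{one} strictly positive component, whereas the paper's direct argument yields $Av>0$ componentwise, and it is this stronger form that the text immediately after the lemma invokes (``$Av$ is positive'') when passing from the sandwich $-\delta v\leq y-A^{+}c\leq\delta v$ to $-\delta Av\leq Ay-AA^{+}c\leq\delta Av$ and then shrinking $\delta$. So if you keep your Lemma-2.3 route, add the one extra row observation above to upgrade $Av\neq0$ to $Av>0$; with that remark your argument delivers everything the paper's proof does.
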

\begin{proof}
    Observe that the entries of $A$ is non-negative and each column and row contains positive value. If $v$ is a positive vector, then $Av$ must be a positive vector as well.
\end{proof}

Now we do the following. Let $v$ be the positive vector such that of $Av\geq0$, let $\delta>0$, $A^{+}$ be the Moore-Penrose inverse off $A$, the inequality $-\delta v\leq y-A^{+}c\leq\delta v$ is true for some $y$ with $c=(1,...,1)^{T}$. Now apply $A$ to this inequality, since $A$ preserve inequality by lemma 2.3, $Av$ is positive, we obtained $-\delta Av\leq Ay-AA^{+}c\leq\delta Av$. We have the following estimate.

\begin{theorem}
    Given $\epsilon>0$, there exists $y$ such that $|Ay-AA^{+}c|_{i}\leq\epsilon$ for all $i$. That is, $S(\epsilon,n)$ always have solution for a given $\epsilon>0$.
\end{theorem}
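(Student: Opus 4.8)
The plan is to build directly on the construction in the paragraph preceding the statement, reducing everything to the inequality-preserving property of $A$. First I would fix a strictly positive vector $v$; since $A$ has non-negative entries and a positive entry in each row (the observation used in the proof of Lemma 2.4), the product $Av$ is again strictly positive, so $M := \max_{i}(Av)_{i}$ is a fixed finite positive constant depending only on $A$ and the chosen $v$.

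Given $\epsilon > 0$, I would then set $\delta := \epsilon/M$ and pick any $y$ satisfying $-\delta v \le y - A^{+}c \le \delta v$ component-wise; such $y$ plainly exist, the simplest being $y = A^{+}c$, for which $y - A^{+}c = 0$ sits inside the band because $v > 0$. Applying $A$ to this two-sided inequality and invoking Lemma 2.3 in each direction to preserve the ordering yields $-\delta Av \le Ay - AA^{+}c \le \delta Av$. Since $Av > 0$, reading this component-wise gives $|Ay - AA^{+}c|_{i} \le \delta (Av)_{i} \le \delta M = \epsilon$ for every $i$, which is the first assertion.

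The hard part will be the identification asserted in the second sentence, namely that the estimate just obtained is the same as solvability of $S(\epsilon,n)$. This requires replacing $AA^{+}c$ by $c$. By Theorem 2.2 the rank of $A$ is $n-1$, while the ambient dimension is $L_{n}-1$, which is far larger; hence $AA^{+}$ is the orthogonal projection onto the proper subspace $\mathrm{ran}(A)$, and $AA^{+}c = c$ holds precisely when the all-ones vector $c = (1,\dots,1)^{T}$ lies in the column span of $A$, i.e.\ when the system $Ax = c$ is consistent. I expect this consistency to be the genuine obstacle: it is a strong, non-generic condition, and none of the preceding results --- which only control signs and preserve inequalities --- bear on whether $c \in \mathrm{ran}(A)$. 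To close the gap the plan would need an extra structural input about the matrix with entries $R_{k}(i) = i \bmod k$, for instance an explicit column combination equal to $c$, or a proof that every covector annihilating $\mathrm{ran}(A)$ also annihilates $c$; absent such an argument, the passage from the finite inequality system back to the Nyman-Beurling-Baez-Duarte density statement is exactly where the reasoning must be examined most carefully.
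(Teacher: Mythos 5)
Your first two paragraphs reproduce, with the quantifiers made explicit, exactly the argument the paper intends: the paper's entire proof of this theorem is the sentence ``by choosing small enough $\delta$ in above, the conclusion is easily seen,'' referring to the displayed chain $-\delta v\leq y-A^{+}c\leq\delta v$ yielding $-\delta Av\leq Ay-AA^{+}c\leq\delta Av$ via Lemma 2.3, and your choice $\delta=\epsilon/\max_{i}(Av)_{i}$ is the precise form of that. (As you implicitly note, the first sentence is in fact trivial: $y=A^{+}c$ gives $Ay-AA^{+}c=0$.)

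Your third paragraph names a genuine gap, and the gap is in the paper, not in your reasoning: the paper's proof never addresses the ``that is'' clause, and it cannot, because $S(\epsilon,n)$ demands $|Ay-c|_{i}\leq\epsilon$ while the estimate only controls $|Ay-AA^{+}c|_{i}$, and $c\notin\mathrm{ran}(A)$ in general. Your suspicion can be sharpened to a counterexample: for $n=3$ one has $L_{3}=6$, and the rows $i=2$ and $i=4$ of $S(\epsilon,3)$ read $|2a_{3}-1|\leq\epsilon$ and $|a_{3}-1|\leq\epsilon$ (since $2\equiv 0$ and $4\equiv 0 \pmod 2$, while $2\equiv 2$ and $4\equiv 1 \pmod 3$), forcing $1-\epsilon\leq a_{3}\leq(1+\epsilon)/2$, which is impossible for $\epsilon<1/3$. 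So the second sentence of the theorem is not merely unproven but false: for fixed $n$ the sup-norm distance from $c$ to $\mathrm{ran}(A)$ is a positive constant (the rank is $n-1$ in ambient dimension $L_{n}-1$, as you observe), and $S(\epsilon,n)$ is infeasible for $\epsilon$ below it. The paper tacitly concedes this in the proof of Theorem 1.1, where $\gamma$ is replaced by $P_{n}\gamma=AA^{+}\gamma$ and the discrepancy $|P_{n}\gamma-\gamma|_{i}$ is shunted into the strong-convergence claims of Theorems 2.7--2.9; the obstruction you identified is thus the load-bearing flaw of the paper's overall argument, located exactly where you said the reasoning must be examined.
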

\begin{proof}
    By choosing small enough $\delta$ in above, the conclusion is easily seen.
\end{proof}

We shift our focus to $\gamma$. Let $P_{n}=A_{n}A^{+}_{n}$ where $A_{n}$ is $L_{n}-1\times n-1$ matrix $A$ defined above, and $A^{+}_{n}$ is its Moore-Penrose inverse, which take the first $n-1$ entries of vectors as input. By properties of Moore-Penrose inverse, $P_{n}$ is a orthogonal projection with rank $n-1$. Denote $l^{\infty}$ be the space of bounded sequences and $c_{0}$ be space of sequence converge to zero, with supremum norm be their norm. We first introduce the conditions for strong convergence of linear bounded operator between Banach space.

\begin{theorem}
    Let $(T_{n})$ be a sequence of linear bounded operator from Banach space $X$ to $Y$, then $T_{n}$ is strongly converge to a linear bounded operator $T$ if and only if:\\
    (1) $T_{n}x$ converges for $x$ belongs to dense subset of $X$, and,\\
    (2) $||T_{n}||<C$ for some $C>0$.
\end{theorem}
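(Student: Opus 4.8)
The plan is to prove the two implications separately, since this is a classical convergence criterion in the strong operator topology whose two directions draw on different pieces of machinery: the forward direction rests on the uniform boundedness principle (Banach–Steinhaus), and the reverse direction is a standard three-epsilon argument exploiting the completeness of $Y$.

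For the forward direction, suppose $T_{n}$ converges strongly to a bounded linear operator $T$, meaning $T_{n}x\to Tx$ for every $x\in X$. Condition (1) is then immediate, because convergence at every point of $X$ in particular gives convergence at every point of any dense subset. For condition (2), I would observe that for each fixed $x$ the sequence $(T_{n}x)$ is convergent in $Y$, hence bounded, so $\sup_{n}||T_{n}x||<\infty$ for every $x\in X$. Since $X$ is a Banach space, the uniform boundedness principle applies and yields a single constant $C$ with $\sup_{n}||T_{n}||=C<\infty$, which is exactly (2).

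For the reverse direction, assume (1) and (2) and set $C=\sup_{n}||T_{n}||$. The key step is to show that $(T_{n}x)$ is Cauchy for every $x\in X$. Fixing $x$ and $\epsilon>0$, I would use density to pick $x'$ in the dense subset with $||x-x'||$ small, and then split $||T_{n}x-T_{m}x||\leq ||T_{n}x-T_{n}x'||+||T_{n}x'-T_{m}x'||+||T_{m}x'-T_{m}x||$. The two outer terms are each bounded by $C\,||x-x'||$ using (2), while the middle term tends to $0$ as $m,n\to\infty$ because $(T_{n}x')$ converges by (1) and hence is Cauchy. Completeness of $Y$ then furnishes a limit, which I would name $Tx$. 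It remains to pass to the limit in $T_{n}(\alpha x+\beta y)=\alpha T_{n}x+\beta T_{n}y$ to obtain linearity of $T$, and to note $||Tx||=\lim_{n}||T_{n}x||\leq C\,||x||$, giving boundedness with $||T||\leq C$.

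The only genuinely nontrivial ingredient is the appeal to Banach–Steinhaus in the forward direction, which crucially requires completeness of the domain $X$; everything else reduces to the three-epsilon estimate above, where completeness of the target $Y$ is the essential input. I therefore expect the Banach–Steinhaus step to be the one deserving explicit citation, with the remaining estimates being routine.
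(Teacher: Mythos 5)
Your proof is correct and matches the paper's intended argument: the paper offers no written proof at all, remarking only that ``the proof \ldots is a standard $\frac{\epsilon}{3}$ argument,'' and your reverse direction is exactly that standard $\frac{\epsilon}{3}$ estimate (split through a nearby point of the dense set, bound the outer terms by $C\|x-x'\|$ via (2), use completeness of $Y$ to produce the limit operator). In fact your write-up is the more complete one, since the forward (``only if'') direction genuinely requires Banach--Steinhaus to obtain the uniform bound (2) from pointwise convergence --- an $\frac{\epsilon}{3}$ argument alone cannot deliver it --- and you correctly identify that this is where completeness of the domain $X$ is used, a point the paper's one-line remark glosses over entirely.
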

The proof the above theorem is a standard $\frac{\epsilon}{3}$ argument.\\

Since $P_{n}$ is sequence of projection operator in finite dimensional space, it is a sequence of bounded operator from $c_{0}$ to $c_{0}$. Denote $||.||_{\infty}$ the supremum norm, $||.||$ is the operator norm when the target space is endowed with supremum norm, $||.||_{2}$ is the operator norm when the target space is endowed with inner product norm. We show that $P_{n}$ strongly converge to identity operator $I$ in $c_{0}$.

\begin{theorem}
    The sequence of $P_{n}$ strongly converge to $I$ in $c_{0}$.
\end{theorem}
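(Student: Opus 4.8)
The plan is to apply the strong convergence criterion stated just above with $X = Y = c_0$, $T_n = P_n$, and candidate limit $T = I$. Thus two things must be verified: (1) that $P_n x$ converges for every $x$ in some dense subset of $c_0$, with limit equal to $x$; and (2) that the operator norms $\|P_n\|$, taken from $c_0$ to $c_0$ (supremum norm to supremum norm), are bounded uniformly in $n$. Once both hold, the criterion yields that $(P_n)$ converges strongly to a bounded operator, and the pointwise limits from (1) force that operator to be $I$.

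For the dense subset I would take $D$, the finitely supported sequences (the algebraic span of the standard basis vectors $e_1, e_2, \ldots$), which is dense in $c_0$ in the supremum norm. Since each $P_n$ is the orthogonal projection onto $\mathrm{ran}(A_n)$, and, after extending each finite matrix to act on the first $L_n-1$ coordinates, the ranges form an increasing family, it suffices to show that every $e_j$ eventually lies in $\mathrm{ran}(A_n)$: then $P_n e_j = e_j$ for all large $n$, and convergence on all of $D$ follows by linearity. The step needing care here is exhibiting $e_j$ as a finite combination of the columns $(R_k(\cdot))_{k}$. I would handle this using the lower-triangular block $k\lfloor i/k\rfloor$ already isolated in the proof that $\mathrm{rank}\,A = n-1$, which lets one solve for the required coefficients by back-substitution on the relevant coordinates.

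The main obstacle is condition (2). Each $P_n$ is an orthogonal projection, so $\|P_n\|_2 = 1$ in the Hilbert-space norm, but this gives no a priori control of the supremum-norm operator norm $\|P_n\|$, which could in principle grow with $n$. I would attempt to bound $\|P_n\|$ by estimating the entries of the matrix $P_n = A_n A_n^{+}$ and controlling the $\ell^1$ norms of its rows, since the induced $c_0 \to c_0$ operator norm of a matrix is the supremum over its rows of their $\ell^1$ norms. Establishing that these row sums remain bounded uniformly in $n$ is the crux of the whole argument: it is the one place where the combinatorial structure of the entries $R_k(i) = i \bmod k$ must be exploited quantitatively rather than merely qualitatively, and if the row sums were to diverge then strong convergence in $c_0$ could not be deduced from the criterion at all. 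I expect this uniform bound, and not the pointwise convergence of part (1), to be the genuinely hard part.
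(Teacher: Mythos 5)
Your strategy is the same as the paper's (apply the Theorem 2.6 criterion with the finitely supported sequences as the dense set), but as written it has two genuine gaps, one of which you correctly flag yourself. On condition (2): you are right that $\|P_{n}\|_{2}=1$ gives no control of the $c_{0}\to c_{0}$ norm. For an $m\times m$ orthogonal projection the supremum-norm operator norm can be of order $\sqrt{m}$ --- e.g.\ the rank-one projection onto the span of $(\sqrt{m},1,\dots,1)^{T}$ has first-row $\ell^{1}$ sum roughly $\sqrt{m}/2$ --- and here the ambient dimension $m=L_{n}-1$ grows superexponentially in $n$. Since you leave the uniform bound on the row $\ell^{1}$ sums of $A_{n}A_{n}^{+}$ unproven, your argument is incomplete at precisely its crux. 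You should know that the paper does not close this gap either: its proof simply asserts $\|P_{n}\|\leq\|P_{n}\|_{2}=1$, which is not a valid inequality between the sup-norm and Hilbert-norm operator norms. So the ``genuinely hard part'' you isolated is an actual hole in the paper's own proof, not a step you failed to reconstruct.

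On condition (1), your sketch would also fail as stated. Back-substitution on the triangular block $k\lfloor i/k\rfloor$ solves only the first $n-1$ coordinate equations; to have $e_{j}\in\mathrm{ran}(A_{n})$ the combination $\sum_{k=2}^{n}a_{k}R_{k}(\cdot)$ must in addition vanish at the remaining $L_{n}-1-(n-1)$ coordinates, and nothing forces that: $\mathrm{ran}(A_{n})$ is only an $(n-1)$-dimensional subspace of $\mathbb{R}^{L_{n}-1}$, so a fixed coordinate vector has no reason to lie in it, and consequently $P_{n}e_{j}=e_{j}$ need not hold for any $n$. Your appeal to ``the ranges form an increasing family'' is likewise unsubstantiated, since the ambient dimension changes with $n$ and the columns $R_{k}$ are re-defined on longer index sets at each stage. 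To be fair, the paper's treatment of this same step --- the assertion that $P_{n}x=P_{n+m}x$ for all large $n$ when $x$ is finitely supported, and the claim $V\subset\mathrm{Im}(P)$ used to identify the limit as $I$ --- is equally unjustified and rests on the same unproven nesting of ranges. In short: your diagnosis of where the difficulty lies is more accurate than the paper's proof, but neither your proposal nor the paper establishes the two conditions of the criterion, so the theorem remains unproved on both accounts.
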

\begin{proof}
    Consider $||P_{n}||$, for all $n\in\mathbb{N}$, $||P_{n}||\leq||P_{n}||_{2}=1$, so it is uniformly bounded. Now the space $V=\cup_{n}\mathbb{R}^{n}$ is dense in $c_{0}$. Consider $x\in V$, $x$ is a vector with finitely nonzero terms, for some large enough $n$, $P_{n}x=P_{n+m}x$ for $m\geq1$, so $P_{n}x$ converge for all $x\in V$. By Theorem 2.6, $P_{n}$ strongly converge to some bounded operator $P:c_{0}\rightarrow c_{0}$.\\
    Now by expressing $P_{n}^{2}=P_{n}$, we can show that $P^{2}=P$. So $P$ is a projection in $c_{0}$. Since $P$ is bounded, image of $P$ is closed. Now image of $P_{n}$ is contained in $V$ with rising rank, so $V\subset Im(P)$, since $Im(P)$ is closed, $Im(P)=c_{0}$. So $P=I$.
\end{proof}
Now we state a condition for a sequence of bounded operator strongly converge to identity. The proof is finished by Martin Argerami.
\begin{theorem}
    Let $X$ is a Banach space and $K(X)$ be the closure of finite rank operator of $X$. Let $T_{n}$ be a bounded sequence of operators such that $T_{n}S$ converge to $S$ for all $S\in K(X)$, then $T_{n}$ strongly converge to identity.
\end{theorem}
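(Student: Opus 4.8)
The plan is to unwind the definition of strong convergence and reduce the entire statement to one clean construction. Strong convergence of $T_{n}$ to the identity means precisely that $T_{n}x\to x$ for every $x\in X$, so it suffices to fix an arbitrary $x\in X$ and prove $\|T_{n}x-x\|\to 0$. The case $x=0$ is trivial, so I would assume $x\neq 0$ throughout.

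The key idea is to manufacture, for each such $x$, a single finite rank operator that ``remembers'' $x$, and then feed that operator into the hypothesis. By the Hahn--Banach theorem I can choose $\phi\in X^{*}$ with $\phi(x)=1$, which is possible exactly because $x\neq 0$. I then define the rank-one operator $S_{x}\colon X\to X$ by $S_{x}(y)=\phi(y)\,x$. Being of finite rank, $S_{x}$ lies in the closure $K(X)$ of the finite rank operators, so the hypothesis applies to it and gives $T_{n}S_{x}\to S_{x}$.

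To finish, I would simply evaluate this convergence at the vector $x$ itself. Since $\phi(x)=1$, we have $S_{x}(x)=x$ and $T_{n}S_{x}(x)=\phi(x)\,T_{n}x=T_{n}x$, so that $\|T_{n}x-x\|=\|T_{n}S_{x}(x)-S_{x}(x)\|\leq\|T_{n}S_{x}-S_{x}\|\,\|x\|\to 0$ when the convergence $T_{n}S_{x}\to S_{x}$ is taken in operator norm; and if it is read only pointwise, then $T_{n}S_{x}(x)\to S_{x}(x)$ already says $T_{n}x\to x$ directly. Because $x$ was arbitrary, this yields $T_{n}\to I$ strongly.

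I do not expect a genuine obstacle here: the whole content is the rank-one trick, and the uniform bound $\sup_{n}\|T_{n}\|<\infty$ is not actually needed for this argument. Boundedness would only become relevant for a second, more pedestrian route, in the spirit of Theorem 2.6 — namely a $\frac{\epsilon}{3}$ argument showing that the set of $x$ with $T_{n}x\to x$ is a closed subspace, together with the observation that the ranges of finite rank operators (for instance the $S_{x}$ above) already exhaust $X$. I would therefore keep the direct construction as the main proof and retain the hypothesis $\sup_{n}\|T_{n}\|<\infty$ only as a safeguard against a weaker reading of the convergence assumption.
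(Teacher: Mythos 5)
Your proposal is correct and follows essentially the same route as the paper's own proof: both use the Hahn--Banach rank-one trick of choosing $f\in X^{*}$ with $f(x)=1$, setting $Sy=f(y)x$ so that $Sx=x$, and then evaluating the hypothesis $T_{n}S\to S$ at $x$ to get $T_{n}x=T_{n}Sx\to Sx=x$. Your added remark that the uniform bound $\sup_{n}\|T_{n}\|<\infty$ is not actually used in this argument is accurate and applies equally to the paper's version.
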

\begin{proof}
    For any $x\in X$, there exists a rank one operator $T$ such that $Sx=x$(There exists linear functional $f$ such that $f(x)=1$ and let $Sy=f(y)x$). So $T_{n}x=T_{n}Sx$, so it converge to $Sx=x$, thus the conclusion.
\end{proof}

Now let $P_{n}: l^{\infty}\rightarrow l^{\infty}$, it can be done because every $P_{n}$ is finite rank and $c_{0}\subset l^{\infty}$. We show that $P_{n}$ strongly converge to $I$ in $l^{\infty}$.
\begin{theorem}
    $P_{n}$ strongly converge to $I$ in $l^{\infty}$.
\end{theorem}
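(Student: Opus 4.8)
The plan is to apply the criterion of Theorem 2.7 with $X=l^{\infty}$ and $T_{n}=P_{n}$. This reduces the desired strong convergence $P_{n}\to I$ on $l^{\infty}$ to two verifications: that $(P_{n})$ is uniformly bounded as a family of operators on $l^{\infty}$, and that $P_{n}S\to S$ in operator norm for every $S$ in $K(l^{\infty})$, the closure of the finite rank operators. The appeal to Theorem 2.7 rather than to Theorem 2.6 is forced here, because the finitely supported sequences $V=\cup_{n}\mathbb{R}^{n}$ exploited in Theorem 2.8 are dense in $c_{0}$ but not in $l^{\infty}$, so the dense-subset argument is simply unavailable.

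First I would settle uniform boundedness. Each $P_{n}=A_{n}A_{n}^{+}$ is an orthogonal projection of the Hilbert space $H$, so $||P_{n}||_{2}=1$; extending $P_{n}$ to $l^{\infty}$ through the same finite matrix and reusing the supremum-norm bound already invoked in Theorem 2.8, one obtains $||P_{n}||\leq 1$ for all $n$. Thus the boundedness hypothesis of Theorem 2.7 holds with constant $C=1$.

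Next I would reduce the operator condition to a statement about single vectors. Since $K(l^{\infty})$ is by definition the closure of the finite rank operators and $(P_{n})$ is uniformly bounded, an $\frac{\epsilon}{3}$ argument shows it suffices to prove $P_{n}S\to S$ in operator norm for finite rank $S$. Writing such an $S$ as a finite sum $S=\sum_{j}f_{j}(\cdot)\,v_{j}$ with $f_{j}\in(l^{\infty})^{*}$ and $v_{j}\in l^{\infty}$, one has $||P_{n}S-S||\leq\sum_{j}||f_{j}||\,||P_{n}v_{j}-v_{j}||_{\infty}$, so the whole theorem comes down to showing $||P_{n}v-v||_{\infty}\to 0$ for every image vector $v$. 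When $v\in c_{0}$ this is exactly Theorem 2.8.

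The main obstacle is precisely the image vectors $v$ that do not lie in $c_{0}$. A finite rank operator on $l^{\infty}$ may send vectors into any direction, and the decisive case is $v=\gamma=(1,1,\dots)$: here $||P_{n}\gamma-\gamma||_{\infty}\to 0$ cannot be deduced from Theorem 2.8, since $\gamma\notin c_{0}$ and in fact lies at supremum-norm distance $1$ from the finitely supported sequences on which $P_{n}$ is eventually stationary. Controlling $||P_{n}v-v||_{\infty}$ uniformly in this regime is the genuine content of the theorem, and for $v=\gamma$ it is equivalent to the convergence $P_{n}\gamma\to\gamma$ that underlies the Nyman-Beurling-Baez-Duarte criterion. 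I would therefore expect the crux to be an independent uniform estimate on $P_{n}v-v$ for bounded $v$, extracted from the solvability of the systems $S(\epsilon,n)$ in Theorem 2.5 rather than from passage through $c_{0}$; this arithmetic step, and not the soft functional-analytic reductions above, is where the difficulty of the theorem truly resides.
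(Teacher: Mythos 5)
Your soft reductions follow the same skeleton as the paper's proof — invoke the closure-of-finite-rank criterion (the paper's Theorem 2.8; you cite it as 2.7, having interchanged the labels of 2.7 and 2.8 throughout), write a finite rank $S$ as $\sum_{j}f_{j}(\cdot)v_{j}$, and reduce everything to $||P_{n}v-v||_{\infty}\rightarrow0$ for individual image vectors $v\in l^{\infty}$ — and your diagnosis of where the weight falls is exactly right: the case $v\notin c_{0}$, above all $v=\gamma$. But your proposal stops there. You only \emph{expect} a uniform estimate on $P_{n}v-v$ to be extractable from the solvability of $S(\epsilon,n)$ in Theorem 2.5, without producing it, so the one step that carries the entire theorem is left unproved. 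And that step cannot be supplied: each $P_{n}=A_{n}A_{n}^{+}$ sends any sequence to a vector supported in the first $L_{n}-1$ coordinates, so $P_{n}(l^{\infty})$ lies in the finitely supported sequences, hence in $c_{0}$, which is closed in $l^{\infty}$ under the supremum norm. Consequently $(P_{n}\gamma-\gamma)_{i}=-1$ for every $i\geq L_{n}$, giving $||P_{n}\gamma-\gamma||_{\infty}\geq1$ for all $n$ — a fact you yourself come within a hair of stating when you observe that $\gamma$ lies at distance $1$ from the finitely supported sequences. So $P_{n}\gamma\not\rightarrow\gamma$, the theorem as stated is false, and no estimate from Theorem 2.5 (which constrains only the first $L_{n}-1$ coordinates) can repair it.

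It is worth recording that your reduction, pursued honestly, exposes the defect in the paper's own proof rather than reproducing it: the paper controls the vectors $x_{m}=Tx$ by appealing to its Theorem 2.7, which yields $P_{n}x\rightarrow x$ only for $x\in c_{0}$, while the range of a finite rank operator on $l^{\infty}$ need not lie in $c_{0}$ — the rank-one operator $S=f(\cdot)\gamma$ with $f(\gamma)=1$, which the paper's own Theorem 2.8 requires, is precisely the excluded case. A second, lesser, gap you share with the paper: the assertion $||P_{n}||\leq||P_{n}||_{2}=1$ is unjustified, since the supremum-norm operator norm of an orthogonal projection matrix can exceed its spectral norm (the projection of $\mathbb{R}^{2}$ onto the span of $(2,1)^{T}$ has supremum-norm operator norm $6/5$), so even your uniform-boundedness step with $C=1$ would need a separate argument. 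In short: the functional-analytic scaffolding in your proposal is sound and matches the paper's, but the ``arithmetic step'' you defer is not merely the hard part — it is false, so neither your outline nor the paper's argument can establish the statement.
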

\begin{proof}
    We first show that $P_{n}T$ converge to $T$ for $T$ is a finite rank operator. Consider $||P_{n}T-T||=sup_{||x||\leq1}(||(P_{n}T-T)x||_{\infty})$. Since $T$ is finite rank, $||P_{n}T-T||=sup_{||x||\leq1}||P_{n}x_{m}-x_{m}||_{\infty}$, where $x_{m}=Tx$ and it is a finite dimensional vector. Since $T$ is a compact operator and the closed unit ball $B_{1}$ is a bounded set, $T(B_{1})$ is a bounded set, denote the closure of this set to be $K$, $K$ is compact since it is closed, and bounded in a finite dimensional space(range of $T$ is finite dimension and $T$ is compact operator). Since $K$ is compact, $x_{m,n}$ contain a convergent subsequence such that it converge to some $x_{m}\in K$. Consider $||P_{n}x_{m,n}-x_{m,n_{k}}||_{\infty}$ , $x_{m,n_{k}}$ being the subsequence,
    \begin{center}
        $||P_{n}x_{m,n_{k}}-x_{m,n_{k}}||_{\infty}$\\
        $\leq||P_{n}x_{m,n_{k}}-P_{n}x_{m}||_{\infty}+||P_{n}x_{m}-x_{m}||_{\infty}+||x_{m,n_{k}}-x_{m}||_{\infty}$\\
        $\leq||P_{n}||||x_{m,n_{k}}-x_{m}||_{\infty}+||P_{n}x_{m}-x_{m}||_{\infty}+||x_{m,n_{k}}-x_{m}||_{\infty}$\\
    \end{center}
    Now $||P_{n}||\leq||P_{n}||_{2}=1$ since it is orthogonal projection, the first and third term can be made less than $\frac{\epsilon}{3}$ by large enough $n$, the second term can be made less than $\frac{\epsilon}{3}$ by Theorem 2.7. So the whole term is less than $\epsilon$, so $||P_{n}x_{m,n_{k}}-x_{m,n_{k}}||_{\infty}$ converge to zero. Observe that this fact holds for any converging subsequence $x_{m,n_{k}}$. Now the sequence $||P_{n}x_{m,n}-x_{m,n}||_{\infty}$ is bounded and therefore contain a converging subsequence $Q_{n_{k}}$ in $\mathbb{R}$. Assume $Q_{n_{k}}$ converge to some value $y\in\mathbb{R}$ other than zero. Since $Q_{n_{k}}=||P_{n_{k}}x_{m,n_{k}}-x_{m,n_{k}}||_{\infty}$, we can found a subsequence $x_{n_{k_{j}}}$ such that it converge and $Q_{n_{k_{j}}}$ converge to zero by above result, contradiction, so $||P_{n}x_{m,n}-x_{m,n}||_{\infty}$ converge to zero. Since $||P_{n}T-T||\leq||P_{n}x_{m,n}-x_{m,n}||_{\infty}$, so $\lim_{n}||P_{n}T-T||=0$.\\
    Now consider $T$ be any bounded operator which belongs to closure of finite rank operator. Consider $||P_{n}T-T||$, which is equal to $||P_{n}T+P_{n}T_{m}-P_{n}T_{m}-T||$ for some $T_{m}$ being a finite rank operator converge to $T$. By triangle inequality, we have 
    \begin{center}
    $||P_{n}T-T||\leq||P_{n}T-P_{n}T_{m}||+||P_{n}T_{m}-T||$,\\
    $\leq||P_{n}T-P_{n}T_{m}||+||P_{n}T_{m}+T_{m}-T_{m}-T||$\\
    $\leq||P_{n}||||T_{m}-T||+||P_{n}T_{m}-T_{m}||+||T_{m}-T||$ by triangle inequality,\\
    \end{center}
    Now $||P_{n}||\leq||P_{n}||_{2}=1$ since it is orthogonal projection, we can choose large enough $n,m$ such that $||T_{m}-T||<\frac{\epsilon}{3}$ and $||P_{n}T_{m}-T_{m}||<\frac{\epsilon}{3}$ since $P_{n}T_{m}$ converge $T_{m}$, so the above quantity is less than any positive number $\epsilon$ given large enough $n$, so $\lim_{n}P_{n}T=T$ for all $T\in K(X)$. Applying Theorem 2.8, $P_{n}$ strongly converge to identity in $l^{\infty}$.
\end{proof}

Now we can prove the Riemann hypothesis.\\
\textit{proof of Theorem 1.1}: Consider $\lim_{n}||x_{n}-\gamma||_{H}^{2}=\lim_{n}\sum_{i=1}^{\infty}\frac{|Aa_{n}-\gamma|_{i}^{2}}{i(i+1)}$, we have\\
\begin{center}
    $||x_{n}-\gamma||_{H}^{2}=\lim_{n}\sum_{i=1}^{\infty}\frac{|Aa_{n}-\gamma|_{i}^{2}}{i(i+1)}$\\
    $||x_{n}-\gamma||_{H}^{2}=\lim_{n}(\sum_{i\in J}\frac{|Aa_{n}+P_{n}\gamma-P_{n}\gamma-\gamma|_{i}^{2}}{i(i+1)}+\sum_{i=1}^{\infty}\frac{|1|}{iL_{n}(iL_{n}+1)})$ where $j\in J$ if $R_{L_{n}}(j)<L_{n}$\\
    $||x_{n}-\gamma||_{H}^{2}\leq\lim_{n}(\sum_{i\in J}\frac{(|Aa_{n}-P_{n}\gamma|_{i}+|P_{n}\gamma-\gamma|_{i})^{2}}{i(i+1)})+\lim_{n}\sum_{i=1}^{\infty}\frac{1}{iL_{n}(iL_{n}+1)}$ by triangle inequality.\\
\end{center}
Now by Theorem 2.5, given $\epsilon>0$, there always exists $a_{n}$ such that $|Aa_{n}-P_{n}\gamma|_{i}<\epsilon$ for all $n$ and all $i$, and by Theorem 2.9, $P_{n}$ strongly converge to $I$, so the first term goes to zero. For the second term, since $L_{n}\geq n$, thus $\frac{1}{iL_{n}(iL_{n}+1)}\leq\frac{1}{in(in+1)}$, which converges to zero. Apply dominated convergence theorem will give us zero as well. So the whole term converge to zero. By Theorem 2.1, the Riemann hypothesis is true.

\section*{Acknowledgement}
I thank Dr. Billy Leung, Mr. Pak Tik Fong, Mr. Dave Yeung, and Dr. Kenny Yip for their insightful feedback. I thank Martin Argerami for providing insight to the proof. I also thank Prof. Michel Balazard for pointing out the mistake in the original version.\\
This research did not receive any specific grant from funding agencies in the public, commercial, or not-for-profit sectors.

\bibliographystyle{abbrv}
\bibliography{ref}

\end{document}